\documentclass[11pt]{article}
\usepackage{amsfonts}
\usepackage{graphicx}
\usepackage{amsmath}
\usepackage{amsthm}
\usepackage{amssymb}
\usepackage{amscd}
\usepackage{color}
\usepackage{url}

\usepackage{graphicx}
\usepackage{microtype}
\usepackage{enumerate}
\usepackage{pinlabel}
\usepackage[top=1.1in,bottom=1.6in,left=1.3in,right=1.3in]{geometry}

\theoremstyle{definition}

\newtheorem{thm}{Theorem}[section]
\newtheorem{cor}[thm]{Corollary}

\newtheorem{lem}[thm]{Lemma}
\newtheorem{defi}[thm]{Definition}
\newtheorem{rem}[thm]{Remark}
\newtheorem*{MW}{Milnor--Wood inequality \cite{Milnor, Wood}}

\renewcommand{\H}{\mathbb{H}}
\newcommand{\R}{\mathbb{R}}
\newcommand{\Z}{\mathbb{Z}}

\newcommand{\C}{\mathbb{C}}

\DeclareMathOperator{\id}{id}

\DeclareMathOperator{\SL}{SL}
\DeclareMathOperator{\SO}{SO}
\DeclareMathOperator{\GL}{GL}
\DeclareMathOperator{\Homeo}{Homeo}
\DeclareMathOperator{\Diff}{Diff}

\DeclareMathOperator{\Hom}{Hom}

\DeclareMathOperator{\isom}{Isom}

\bibliographystyle{plain}

\title{Unboundedness of some higher Euler classes}
\author{Kathryn Mann}
\date{}
\begin{document}

\maketitle

\begin{abstract}
We study Euler classes in groups of homeomorphisms of Seifert fibered 3-manifolds.  In contrast to the familiar Euler class for $\Homeo_0(S^1)$ as a discrete group, we show that these Euler classes for $\Homeo_0(M^3)$ as a discrete group are \emph{unbounded} classes.   In fact, we give examples of flat topological $M$ bundles over a genus $3$ surface whose Euler class takes arbitrary values.
\end{abstract}

\setcounter{section}{1}

\bigskip

For a topological group $G$, let $G^\delta$ denote $G$ with the discrete topology, and $H^*(G; R) = H^*(BG^\delta; R)$ the group cohomology of $G$ with $R = \Z$ or $\R$ coefficients.   When $G$ is the group of homeomorphisms or diffeomorphisms of a manifold $M$, elements of $H^*(G; R)$ are characteristic classes of flat or foliated $M$ bundles with structure group $G^\delta$.    One says that a class is \emph{bounded} if it has a cocycle representative taking a bounded set of values on all $k$-chains of the form $(g_i, ..., g_k) \in G^k$.    Determining which classes are bounded is an interesting and often difficult question in its own right (see \cite{Monod} for an introduction to this and related problems in bounded cohomology) but particularly motivated in the case where $G$ is a subgroup of $\Homeo(M)$.  In this case, bounds on characteristic classes give obstructions for topological $M$-bundles to be flat.   On the flipside, showing that a class has \emph{no} bounded representative often amounts to constructing new examples of flat bundles.    
\medskip

The best known, and perhaps earliest example of a bounded class comes from Milnor \cite{Milnor}, who gave a bound on the Euler number of $\SL(2,\R)$ bundles over surfaces with discrete structure group.   
Wood \cite{Wood} generalized this argument to topological circle bundles ($\Homeo_0(S^1)$ naturally contains $\SL(2,\R)$ as a subgroup), to obtain a complete characterization of the oriented, topological circle bundles over surfaces that admit a foliation transverse to the fibers\footnote{In the smooth setting, this is equivalent to admitting a flat connection, hence, even in the topological case such bundles are called ``flat."  This is equivalent to the condition that the structure group reduces to a discrete group.}.  In modern language, their results can be reframed as follows:

\begin{MW}
The real Euler class in $H^2(\Homeo_0(S^1); \R)$ is bounded, and has (Gromov) norm equal to 1/2. 
\end{MW}

More generally, when $G$ is a real algebraic subgroup of $\GL(n,\R)$, it follows from \cite{Gromov} that the elements of $H^*(G; \R)$ obtained by the map $BG \to BG^\delta$ have bounded representatives, and explicit bounds on their norms have been computed in several cases.  See eg. \cite{BG, CO, DT} and references therein.  However, much less is known for large, nonlinear groups, in particular homeomorphism groups of manifolds.   

The first natural case to consider is that of any manifold $M$ such that $\pi_1(\Homeo_0(M))$ is either isomorphic to $\Z$ or has a $\Z$ summand.  (Here $\Homeo_0(M)$ denotes the identity component of $\Homeo(M)$.)  In this case $H^2(B\Homeo_0(M); \Z)$ has a $\Z$ summand, generated by an {\em Euler class} for topological $M$ bundles.  This pulls back to a {\em discrete Euler class} in $H^2(\Homeo_0(M); \Z)$, and we may ask which such classes are bounded.   The Milnor--Wood inequality is a positive answer to this question in the case $M = S^1$.  The only other known results are in dimension 2:  For $M=\R^2$, we also have that $\pi_1(\Homeo_0(\R^2)) = \Z$, and Calegari \cite{Calegari} showed that the discrete Euler class of topological $\R^2$ bundles is unbounded.  (In fact, he also showed unboundedness of its pullback to $H^2(\Diff_0(\R^2); \R)$.)    In the case of the 2-torus, $\pi_1(\Homeo_0(T^2)) = \Z \times \Z$, and an argument in \cite{MR} shows that discrete Euler classes are unbounded.

Here we address the same question for 3-manifolds.  
Following work of Hatcher, Ivanov,  McCullough and Soma, and Bamler--Kleiner \cite{Hatcher, Ivanov, MS, BK} on the generalized Smale conjecture, the inclusion $\isom_0(M) \to \Homeo_0(M)$ is known to be a homotopy equivalence on almost all geometric manifolds $M$ (the one open case is that were $M$ is non-Haken infranil).   In particular, this implies that for many closed, prime Seifert fibered 3-manifolds, rotation of the fibers gives either a homotopy equivalence $\SO(2) \to \Homeo_0(M)$, or at least a $\Z$ factor in $\pi_1(\Homeo_0(M))$, hence an Euler class for $M$ bundles.  
Our main result is that \emph{all} of these discrete Euler classes are unbounded.  Precisely, we show:

\begin{thm} \label{fibered thm}
Let $M$ be a closed Seifert fibered 3-manifold such that the inclusion $\SO(2) \hookrightarrow \Homeo_0(M)$ induces an inclusion of $\pi_1(\SO(2))$ as a direct factor in $\pi_1(\Homeo_0(M))$.  Then any class $\alpha \in H^2(\Homeo_0(M); \R)$ with nonzero image in $H^2(\SO(2); \R)$ is unbounded.  
\end{thm}

This is a direct consequence of the following stronger result.  

\begin{thm}  \label{surface group thm}
Let $M$ be as in  Theorem \ref{fibered thm}, and let $e \in H^2(\Homeo_0(M); \Z)$ have nonzero image in $H^2(\SO(2); \Z)$.  
Then, for any $k$ there exists a homomorphism $\rho$ from the fundamental group of a genus 3 surface $\Sigma$ to $\Homeo_0(M)$ such that 
$\langle \rho^*(e), [\Sigma] \rangle = k$. 
\end{thm}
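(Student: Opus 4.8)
The plan is to recast the statement group-theoretically and then build the representation by hand. Write $G = \Homeo_0(M)$ and let $0 \to \Z \to \widetilde{G} \to G \to 1$ be the central extension classified by $e$, with central generator $z$. For any representation $\rho$ of a genus-$g$ surface group, choosing lifts $\widetilde{\rho(a_i)}, \widetilde{\rho(b_i)} \in \widetilde G$ of the standard generators, the product $\prod_{i=1}^{g}[\widetilde{\rho(a_i)},\widetilde{\rho(b_i)}]$ lands in the central $\Z$ and equals $z^{\langle \rho^*(e),[\Sigma]\rangle}$. Conversely, any expression of $z^k$ as a product of $g$ commutators in $\widetilde G$ descends to a representation of $\pi_1(\Sigma_g)$ with Euler number $k$. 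Thus Theorem \ref{surface group thm} is exactly the assertion that $z^k$ is a product of three commutators in $\widetilde G$ for every $k$; in the language of stable commutator length, $\mathrm{scl}_{\widetilde G}(z) = 0$ with the sharp bound $\mathrm{cl}(z^k) \le 3$. I note that this formulation immediately yields Theorem \ref{fibered thm}: were $e$ bounded with norm $\|e\|$, the fundamental inequality would force $|\langle \rho^*(e),[\Sigma]\rangle| \le \|e\|\cdot\|[\Sigma]\| = (4g-4)\|e\|$, a fixed bound at $g=3$ that arbitrary $k$ contradicts.

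The hypothesis that $e$ has nonzero image $c \in H^2(\SO(2);\Z)$ enters as follows: lifting the one-parameter rotation $t \mapsto R_t$ (whose source is simply connected) gives a path $\widetilde R_t$ in $\widetilde G$ with $\widetilde R_1 = z^{c}$, so that a full rotation of the fibers --- the identity of $M$ --- carries nontrivial central weight. The contrast to keep in mind is the circle: in $\widetilde{\Homeo}_+(S^1)$ the analogous central element has $\mathrm{scl} = 1/2$ (equivalently, the Milnor--Wood inequality), so its commutator length grows with $k$ and the genus must grow accordingly. The point is that in a $3$-manifold the fiber rotation, while still detected by $e$, becomes displaceable, and I would exploit this to drive $\mathrm{scl}(z)$ to $0$. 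Concretely I would work inside a solid-torus neighborhood $N \cong S^1 \times D^2$ of a regular fiber, on which $R_t$ rotates the $S^1$ factor, extending all homeomorphisms by the identity; the transverse disk $D^2$ is the source of the extra room unavailable on $S^1$.

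The engine of the construction is a portability, or infinite-swindle, argument showing that $z^k$ has commutator length at most three, uniformly in $k$. Because $R_{t_0}$ displaces the sub-solid-torus $[0,\tfrac12]\times D^2 \subset N$ off itself, a full rotation supported in such a sub-region is conjugate to, and commutes with, one supported in a disjoint region. A Mather-style swindle should then collect $k$ conjugate copies of a ``unit'' local-rotation gadget, arranged in disjoint sub-tori and cyclically shifted, into a product of a bounded number of commutators whose lift is $z^k$. Reaching every integer $k$, rather than only the multiples of $c$ visible to $\SO(2)$, is where one genuinely leaves the fiber-rotation circle and uses the flexibility of $\Homeo_0(M)$ to realize the generator $z$ itself by such a local gadget.

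The hard part will be making the swindle reinforce rather than cancel, and pinning the commutator length to exactly three uniformly in $k$. A naive Eilenberg swindle trivializes the class, so the gadget must be chosen so that the packed copies \emph{add} their central contributions, and one must verify simultaneously that (i) the three commutators multiply to the identity homeomorphism of $M$, (ii) their lifted product is precisely $z^k$ --- in particular that every integer, not merely $c\Z$, is attained --- and (iii) all six homeomorphisms lie in the identity component. The crux is (ii) at fixed genus: one must certify that no residual Milnor--Wood-type quasimorphism survives on $\widetilde{\Homeo_0(N)}$ to obstruct the amplification, which is exactly the step that uses the transverse dimension and fails for $S^1$. I expect this to reduce to an explicit computation in $\widetilde{\Homeo_0(N)}$ once the unit gadget and the three-handle bookkeeping are fixed.
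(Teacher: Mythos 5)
Your opening reduction is correct and is exactly the paper's framework: with $0 \to \Z \to \tilde G \to G \to 1$ the central extension classified by $e$, the theorem amounts to writing the central element $z^k$ as a product of three commutators in $\tilde G$, and the hypothesis on $e$ enters just as you say, via lifting the fiber-rotation circle. The gap is in your engine. A swindle carried out entirely inside a solid torus neighborhood $N$ of a regular fiber, with all homeomorphisms extended by the identity, can never produce $z^k$ for $k \ne 0$, and the obstruction is not a residual quasimorphism on $\widetilde{\Homeo_0(N)}$ but elementary homotopy theory. Since the extension is central, $\prod_{i=1}^3[\tilde a_i,\tilde b_i]$ depends only on the images $a_i, b_i \in G$; if these are supported in $N$ and isotopic to the identity through $N$-supported homeomorphisms (as any explicitly constructed gadget would be), one may take the lifts to be classes of paths through $N$-supported homeomorphisms, and then, because the product of the commutators is the identity in $G$, the lifted product is the class of a \emph{loop} of homeomorphisms supported in $N$. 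Now evaluate at a basepoint $x_0 \notin N$: such a loop evaluates to the constant loop at $x_0$, while the generator $z$ (the full fiber-rotation loop) evaluates to the fiber class $[F] \in \pi_1(M,x_0)$. In the paradigm case of the theorem --- $M$ Haken, hence aspherical, with $\Homeo_0(M) \simeq S^1$ by Hatcher--Ivanov --- the class $[F]$ has infinite order and $\pi_1(\Homeo_0(M)) \cong \Z$ is generated by $z$, so evaluation is injective on $\pi_1(\Homeo_0(M))$ and your loop is forced to be nullhomotopic: the product of commutators is $z^0$, no matter how the gadget is chosen. The same computation rules out $T^3$. The ``extra room'' of the transverse disk is real but irrelevant here: $z$ is a \emph{globally} supported element of $\tilde G$, rotating every fiber of $M$, and nothing supported in a fixed proper subset can be homotopic to it. This is why your own worry that ``a naive Eilenberg swindle trivializes the class'' cannot be engineered away --- on $N$-supported elements the class is honestly trivial.

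The paper's construction is designed precisely to evade this. Its local gadget (Lemma \ref{disc lem}) is close to what you envision: over a disk $B$ in the base orbifold it produces a single commutator $[a_B,b_B]$ of homeomorphisms supported in the fibered solid torus over $B$, which rotates the fiber over $x$ by $2\pi K\lambda(x)$ for a bump function $\lambda$. But instead of packing copies into one torus, it spreads them over all of $M$: Lemma \ref{good partition} builds an open cover $\{O_1,O_2,O_3\}$ of the base by three sets, each a disjoint union of disks, carrying a partition of unity by such bump functions (the three-set cover comes from a degree-three graph dual to a triangulation, and is where genus $3$ comes from). Taking $a_i, b_i$ to be the products of the gadgets over the components of $O_i$, the product $\prod_{i=1}^3[a_i,b_i]$ rotates every fiber by $2\pi K\sum_i\lambda_i = 2\pi K$, hence equals the identity, while the tautological lifts trace out a loop homotopic to the $K$-fold fiber rotation, i.e.\ equal to $z^K$ in $\tilde G$. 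So the three commutators are not three copies of a localized gadget but the three colors of a global partition of unity; making the supports cover $M$ is exactly what allows the central contributions to add up to something nonzero.
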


Our proof is fundamentally different than Calegari's proof of unboundedness of the Euler class for $\Homeo_0(\R^2)$ bundles with discrete structure group, which uses non-compactness of $\R^2$ in an essential way.   It also differs considerably from the existing argument for unboundedness of cohomology classes in $\Homeo_0(T^2)$, which used the fact that $H^2(\Homeo_0(T^2); \Z) \cong \Z^2$ has a $\GL(2,\Z)$-action of the mapping class group of $T^2$.  

\medskip
Section \ref{prelim sec} contains some brief background on bounded cohomology, Gromov norm, and cohomology of homeomorphism groups, giving the tools to derive Theorem \ref{fibered thm} from Theorem \ref{surface group thm}.   The proof of Theorem \ref{surface group thm} is an explicit construction described in Section \ref{surface gp sec}.

\paragraph{Measure-preserving homeomorphisms.}   We contrast the result above with the measure-preserving case.  
Let $M$ be as in Theorem \ref{fibered thm}, and let $G$ be a subgroup of $\Homeo_0(M)$ that preserves a probability measure, or more generally a {\em content} on $M$.   In contrast to Theorem \ref{fibered thm}, work of Hirsch and Thurston implies that Euler classes pull back trivially to $H^2(G; \Z)$.    Their main theorem is the following: 

\begin{thm}[\cite{HT}]
Suppose $E \to B$ is a foliated bundle with structure group consisting of homeomorphisms that preserve a content on the fiber.  Then the induced map $H^*(B, \R) \to H^*(E, \R)$ is injective.
\end{thm}

\noindent To derive the vanishing result stated above, take any foliated $M$-bundle $p: E \to B$.  The pullback bundle $p^\ast(E) \to E$ has a section, so $p^\ast \rho^\ast(e) \in H^2(p^\ast E; \Z)$ is zero.  But if $E$ has content-preserving holonomy (i.e. its holonomy $\rho$ factors through a group $G$ as above), then Hirsch--Thurston implies that  $p^\ast$ is injective on cohomology, so $\rho^\ast(e) = 0$.  

Note that, by averaging any content over the $\SO(2)$ action on a Seifert-fibered manifold $M$, one may assume that it is invariant under rotation of fibers, and $\SO(2)$ includes in the group of content-preserving homeomorphisms.   This gives analogs of the 
Euler class in the group of content-preserving homeomorphisms as in Theorem \ref{fibered thm}; the remark above states that these are zero.   In particular, for the special case of the 2-dimensional torus, since $\pi_1(T^2) = \Z^2$ is amenable (so any action on a manifold $M$ has an invariant probability measure), this gives   

\begin{cor} \label{torus cor}
For $M$ as in Theorem \ref{fibered thm}, any $M$-bundle over $T^2$ with structure group $\Homeo_0(M)^\delta$ has zero Euler class.
\end{cor} 

\noindent Note that this statement would be {\em implied} by boundedness of $e \in H^2(\Homeo_0(M)^\delta; \Z)$.

\paragraph{Acknowledgements.}  The author thanks the referees for pointing out the references to Bamler-Kleiner and Hirsch-Thurston, including the argument given above.  Thanks also to Benson Farb, who first asked the author about analogs of Milnor-Wood (or its failure) in higher dimensions, and to Bena Tshishiku, Sam Nariman, and Wouter Van Limbeek for discussions and comments on this problem.   The author was partially supported by NSF grant DMS-1606254.

%------------------------
\section{Preliminaries}  \label{prelim sec} 

We quickly review the standard theory of bounded cohomology, as in Gromov \cite{Gromov}, and set up notation.  A reader who is well-acquainted with the subject can skip to Section \ref{h.e. sec} where we discuss cohomology of homeomorphism groups.   

For $M$ a manifold, and $a \in H_*(M; \R)$ an element of singular homology, there is a pseudonorm
$$\| a \| := \inf \{ \Sigma |c_i| :  [\Sigma \, c_i \sigma_i] = a \}$$ where the infimum is taken over all real singular chains representing $a$ in homology.  The $L_1$ norm on singular chains used in this definition gives a dual $L_\infty$ norm on singular cochains; and the set of bounded cochains forms a subcomplex of $C^*(M)$.  The cohomology of this complex is the \emph{bounded cohomology} $H^*_b(M; \R)$ of $M$.  The (pesudo-)\emph{norm}, $\| \alpha \|$, of a cohomology class $\alpha$ is the infimum of the $L_\infty$ norms of representative cocycles; and if $\| \alpha \|$ is finite we say that it is a \emph{bounded class}.

One can extend these definitions quite naturally to the Eilenberg--MacLane group cohomology.  Recall that, for a discrete group $G$, the set of \emph{inhomogeneous $k$-chains}, $C_k(G)$, is the free abelian group generated by $k$-tuples $(g_1, ..., g_k) \in G^k$ with an appropriate boundary operator.
The homology of this complex is the (integral) group homology $H_k(G; \Z)$; and $H_k(G; \R)$ is the homology of the complex $C_*(G) \otimes \R$.  The homology of the dual complexes $\Hom(C_k, \Z)$ and $\Hom(C_k, \R)$ give the \emph{group cohomology}
$H^k(G; \Z)$ and $H^k(G; \R)$ respectively.    As in the singular homology case above, there is a natural $L_1$ norm on $k$-chains given by $\| \sum s_i (g_{i,1}, ..., g_{i,k}) \| = \sum |s_i|$, which descends to a pseudonorm on homology by taking the infimum over representative cycles.  We also have a dual $L_\infty$ norm on $C^k(G)$, and for $\alpha \in H^*(G; \R)$ we define $$\| \alpha \| := \inf \{ \|c\|_\infty : [c] = \alpha \}.$$  Again, bounded (co)cycles are those with finite norm.    Note that $\| \alpha \|$ is finite if and only if there exists $D$ such that $|\alpha (g_1, g_2, ... , g_k)| < D$ holds for all $(g_1, g_2, ..., g_k) \in G^k$.  

A remarkable theorem of Gromov allows one to pass between groups and spaces: 
\begin{thm}[\cite{Gromov}]
There is a natural isometric isomorphism $H^*_b(\pi_1(M); \R) \to H^*_b(M; \R)$.
\end{thm}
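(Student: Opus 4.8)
The plan is to prove this through the relative homological algebra of Banach $\Gamma$-modules, following Ivanov's reformulation of Gromov's theorem. The engine is the \emph{fundamental lemma} of this theory: for a discrete group $\Gamma$, the bounded cohomology $H^*_b(\Gamma;\R)$ may be computed as the cohomology of the $\Gamma$-invariants of \emph{any} strong relatively injective resolution of the trivial module $\R$ by Banach $\Gamma$-modules, and moreover any two such resolutions are related by comparison chain maps that can be taken to be norm non-increasing. This norm non-increasing feature is what will eventually upgrade the abstract isomorphism to an \emph{isometry}. So the strategy is to exhibit one resolution whose invariants manifestly compute $H^*_b(\pi_1(M);\R)$ and a second whose invariants manifestly compute $H^*_b(M;\R)$, and then to invoke the fundamental lemma.

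For the group side I would use the homogeneous bar resolution: the spaces $\ell^\infty(\Gamma^{n+1})$ with the diagonal $\Gamma$-action and the usual simplicial differential form a strong relatively injective resolution of $\R$ whose invariants are, by definition, the complex computing $H^*_b(\Gamma;\R)$ with $\Gamma = \pi_1(M)$. For the space side, let $p\colon \tilde M \to M$ be the universal cover, on which $\Gamma$ acts by deck transformations, and consider the bounded singular cochain complex $C^*_b(\tilde M;\R)$ with the induced $\Gamma$-action. Since $\Gamma$ acts freely on $\tilde M$, it acts freely on the set of singular simplices, so each $C^n_b(\tilde M;\R)$ is $\ell^\infty$ of a free $\Gamma$-set; such modules are relatively injective, which disposes of the injectivity requirement. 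Pulling cochains back along $p$ identifies the $\Gamma$-invariant subcomplex $C^*_b(\tilde M;\R)^\Gamma$ with $C^*_b(M;\R)$, whose cohomology is $H^*_b(M;\R)$. Thus, once the second complex is known to be a strong resolution of $\R$, the fundamental lemma produces a canonical isomorphism $H^*_b(\pi_1(M);\R)\cong H^*_b(M;\R)$; naturality in $M$ then follows from the functoriality of all of the constructions involved.

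The main obstacle is precisely the remaining \emph{strongness}: one must show that the augmented complex $0 \to \R \to C^0_b(\tilde M;\R) \to C^1_b(\tilde M;\R) \to \cdots$ admits a contracting homotopy by bounded $\R$-linear operators. Equivalently, this is the assertion that the bounded cohomology of the simply connected space $\tilde M$ vanishes in positive degrees --- a genuinely nontrivial statement, since the ordinary cohomology $H^*(\tilde M;\R)$ need not vanish when $M$ is not aspherical (for instance $\tilde M$ could be $S^2$). The naive coning homotopy over a basepoint is unavailable because $\tilde M$ is not contractible, only simply connected. The crux of the argument, going back to Gromov's multicomplexes and made clean by Ivanov, is to build the contracting homotopy from the simple connectivity alone: one uses that spheres bound to make consistent, simplex-by-simplex filling choices, and the essential point --- the place where all the real work sits --- is to verify that the resulting operators on cochains are uniformly bounded in the $L_\infty$ norm. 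Granting this bounded contracting homotopy, both complexes are strong relatively injective resolutions of $\R$, the fundamental lemma applies, and carrying the norm estimates through the comparison maps in both directions yields that the isomorphism is norm-preserving, completing the proof.
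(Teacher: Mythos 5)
The paper offers no proof of this statement to compare against: it is Gromov's Mapping Theorem, quoted verbatim with the citation \cite{Gromov} and used as a black box in Section \ref{prelim sec} to pass between bounded cohomology of spaces and of their fundamental groups. So your argument can only be judged on its own terms. Your skeleton is the standard Ivanov-style proof, and it is the right skeleton: the bar resolution $\ell^\infty(\Gamma^{*+1})$ on the group side, the bounded singular cochains $C^*_b(\widetilde{M};\R)$ on the space side, relative injectivity from freeness of the deck action, the identification $C^*_b(\widetilde{M};\R)^\Gamma \cong C^*_b(M;\R)$, and the reduction of everything to strongness of the augmented singular complex.

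Two genuine gaps remain. First, you explicitly ``grant'' the crux: the existence of a bounded contracting homotopy for $0 \to \R \to C^*_b(\widetilde{M};\R)$, which is essentially the Gromov--Ivanov vanishing theorem for bounded cohomology of simply connected spaces. This is not something one can wave at with ``spheres bound, so make consistent filling choices and check boundedness'': Gromov's proof requires the machinery of multicomplexes, and Ivanov's requires a tower of principal fibrations with amenable (abelian) structure groups killing the higher homotopy groups. Granting it concedes the entire analytic content of the theorem, so what remains is only the formal half. Second, the isometry does not follow from the fundamental lemma in the way you claim: the abstract theory does \emph{not} provide norm non-increasing comparison maps between arbitrary strong relatively injective resolutions (whether every such resolution induces the canonical seminorm is a genuinely subtle point, and not known in general). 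What is true is that there is an explicit norm non-increasing map from \emph{any} strong resolution to the \emph{standard} one, built from the contracting homotopy; this yields that $H^*_b(M;\R) \to H^*_b(\pi_1(M);\R)$ does not increase norms. For the reverse direction one needs a concrete norm non-increasing $\Gamma$-chain map $\ell^\infty(\Gamma^{*+1}) \to C^*_b(\widetilde{M};\R)$, for instance the one that evaluates a bar cochain on the tuple of group elements recording which translates of a fixed fundamental domain contain the vertices of a singular simplex of $\widetilde{M}$. Uniqueness up to $\Gamma$-homotopy of extensions of $\id_\R$ then shows the two maps are mutually inverse on cohomology, and only at that point does one obtain an isometry rather than a mere topological isomorphism.
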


\paragraph{Computing norms.}
In degree two, there is an effective means of estimating the norm of a cohomology class through representations of surface groups.  
For any space $X$, a class $c \in H_2(X; \Z)$ can always be represented as the image of a map from an orientable (possibly disconnected) surface $\Sigma$ into $X$.   If $X$ is a $K(G,1)$, then we may assume $\Sigma$ has no $S^2$ components.  
Supposing additionally that $\Sigma$ is  connected, such a map induces a homomorphism $\rho: \pi_1(\Sigma) \to G$.  Thus, on the level of group cohomology we have $c = \rho_*([\Sigma])$ and
$$\langle \alpha, c \rangle = \langle \rho^*(\alpha), [\Sigma] \rangle.$$
It is easy to verify that $[\Sigma]$ has norm $-2 \chi(\Sigma)$ (See \cite[\S 2]{Gromov} for the computation.)  Hence, we have $\| c\| \leq -2 \chi(\Sigma)$.  
 Thus, to show a cohomology class $\alpha$  is \emph{unbounded}, it suffices to 
 show that
$$\sup_{\rho: \pi_1(\Sigma) \to G} \frac{\langle \rho^*(\alpha), [\Sigma] \rangle}{2 \chi(\Sigma)} = \infty,$$ where the supremum is taken over all homomorphisms from surface groups into $G$.  

Although our goal here only requires us to show unboundedness of some classes, the above can actually be used to compute the norm of a class $\alpha$ in second bounded cohomology.   Matsumoto--Morita and Ivanov showed (independently) that, for any topological space $X$, Gromov's semi-norm on $H^2_b(X; \R)$ is in fact a {\em norm} \cite{MM, Ivanov2}.  Hence $H^2_b(X; \R)$ is a Banach space, with the quotient of $H_2(X;\R)$ by the zero-norm subspace as its dual; and in integral cohomology, the zero norm subspace is precisely the chains representable by maps of surfaces consisting of $S^2$ and $T^2$ components.

Returning to our situation, if $\Sigma$ is a connected surface of genus $g \geq 1$, the quantity $\langle \rho^*(\alpha), [\Sigma] \rangle$ of interest can be easily read off from a central extension.  
Recall that, for any abelian group $A$, there is correspondence between $H^2(G; A)$ and central extensions of $G$ by $A$.
If $\alpha \in H^2(G; \Z)$ is represented by the extension $0 \to \Z \to \hat{G} \to G \to 1$, then $\rho^*(\alpha)$ is represented by the pullback $0 \to \Z \to \rho^*(\hat{G}) \to \pi_1(\Sigma) \to 1$.   
The fundamental group of $\Sigma$ has a standard presentation 
$$\pi_1(\Sigma) = \langle a_1, b_1, ..., a_g, b_g \mid \prod_{i=1}^g [a_i, b_i] \rangle$$  
and the integer $\langle \rho^*(\alpha), [\Sigma] \rangle$ can be computed as follows.  Take 
 lifts $\widetilde{a}_i, \widetilde{b}_i$ of the generators $a_i$ and $b_i$ to elements of $\rho^*(\hat{G})$.  Since this is a central extension, the value of any commutator $[\widetilde{a}_i, \widetilde{b}_i]$ is independent of the choice of lifts $\widetilde{a}_i$ and $\widetilde{b}_i$.   The product of commutators $\prod_{i=1}^g [\widetilde{a}_i, \widetilde{b}_i]$ projects to the identity in $\pi_1(\Sigma)$, so can be identified with an element $n \in \Z$.  One checks easily from the definition that $n = \langle \rho^*(\alpha), [\Sigma] \rangle$.  
 
We note that, although not framed in the language of bounded cohomology, this strategy for computation is already present in Milnor and Wood's work in \cite{Milnor} and \cite{Wood} respectively.

%---------------------------------------
\subsection{Euler classes of homeomorphism groups}  \label{h.e. sec} 

This section describes the known analogs of the Euler class in $\Homeo_0(M)$, for various manifolds $M$, explaining and justifying some of the remarks made in the introduction.  For simplicity, we always assume manifolds are closed.  

As mentioned in the introduction, whenever $M$ is a manifold with a circle action $\SO(2) \to \Homeo_0(M)$ such that the induced map on $\pi_1$ is inclusion of a direct factor, then  $B\Homeo_0(M)$ has a $B\SO(2) = \C P^\infty$ factor, giving an Euler class in second cohomology.  
While we are primarily concerned with the cohomology of discrete groups, a remarkable theorem of Thurston says that, in the very special case of homeomorphism groups of manifolds, this agrees with the cohomology of $B\Homeo$. 

\begin{thm}[Thurston \cite{Thurston}]
Let $M$ be a differentiable manifold.  Then the map $B\Homeo(M)^\delta \to B \Homeo(M)$ induced by the identity map 
$\Homeo(M)^\delta \to \Homeo(M)$ is an isomorphism on homology. 
\end{thm}

\noindent It follows from the theorem that the same statement holds for the identity components $\Homeo_0(M)^\delta \to \Homeo_0(M)$.  
Note that Thurston's theorem implies, in particular, the Euler class and its powers are the only characteristic classes of flat, oriented topological circle bundles.   

Unfortunately, there are not very many other manifolds where the homotopy type of (or at least the cohomology of) the identity component of their homeomorphism group is known.  In dimension 2, we know that $\Homeo_0(\Sigma)$ is contractible for any compact surface of negative Euler characteristic by \cite{EE}.  As mentioned in the introduction, $\SO(2) \to \Homeo_0(\R^2)$ is a homotopy equivalence, but unlike the $M = S^1$ case, the 
Euler class of $\Homeo_0(\R^2)^\delta$ is unbounded by \cite{Calegari}.  
For $M = T^2 = S^1 \times S^1$,  the inclusion $\SO(2) \times \SO(2) \to \Homeo_0(T^2)$ is a homotopy equivalence.   Thus $\Z^2 \cong H^2(B\Homeo(T^2);  \Z) \cong H^2(\Homeo(T^2);  \Z)$.   A direct computation, given in \cite[\S 4.2]{MR}, shows that both generators of $H^2(\Homeo(T^2);  \Z)$ are unbounded.  

The Seifert fibered 3-manifold case, of interest to us, provides essentially the only other examples where the homotopy type of $\Homeo_0(M)$ is both known and known to have a homotopically nontrivial $\SO(2)$ subgroup.    
For Haken manifolds, this is due to the following theorem of Hatcher and Ivanov.  

\begin{thm}[\cite{Hatcher}, \cite{Ivanov}]
Suppose $M$ is a closed, orientable, Haken, Seifert-fibered $3$-manifold.  Then the inclusion $S^1 \to \Homeo_0(M)$ by rotations of the fibers is a homotopy equivalence, except in the case $M=T^3$ where $\Homeo_0(T^3) \cong T^3$.
\end{thm}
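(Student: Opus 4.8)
\emph{The plan} is to show $\Homeo_0(M)\simeq S^1$ by exhibiting fiber rotation as the identity component of the gauge group of the Seifert structure and showing that all the intervening spaces contract. Since $\dim M = 3$, the smoothing theory of $3$-manifolds (Moise, Munkres, Cerf) makes the inclusion $\Diff(M)\hookrightarrow \Homeo(M)$ a weak homotopy equivalence, so I would first reduce to the statement that $S^1\hookrightarrow \Diff_0(M)$ is a weak homotopy equivalence; since both sides have the homotopy type of CW complexes, this upgrades to a genuine homotopy equivalence at the end. All of the identifications below proceed through a chain of three fibrations, each peeling off one layer of the Seifert structure.

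The first fibration comes from the action of $\Diff_0(M)$ on the space $\mathcal{SF}(M)$ of Seifert fibrations: evaluation at the given fibration $p_0$ gives an orbit map $\Diff_0(M)\to \mathcal{SF}(M)$ onto the component $\mathcal{SF}_0$ of fibrations isotopic to $p_0$, with homotopy fiber the fiber-preserving diffeomorphisms $\Diff^{\mathrm{fib}}_0(M)$. If $\mathcal{SF}_0$ is contractible, then $\Diff_0(M)\simeq \Diff^{\mathrm{fib}}_0(M)$. The second fibration is projection to the base orbifold $B$, giving $\mathrm{Vert}_0\to \Diff^{\mathrm{fib}}_0(M)\to \Diff^{\mathrm{orb}}_0(B)$, where $\mathrm{Vert}_0$ is the identity component of the gauge group (fiber-preserving maps covering the identity on $B$). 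Here I would invoke the two-dimensional input of Earle--Eells and Gramain type: under the hypotheses the base orbifold is large enough that $\Diff^{\mathrm{orb}}_0(B)$ is contractible, so $\Diff^{\mathrm{fib}}_0(M)\simeq \mathrm{Vert}_0$. Finally, since $\Diff_0(S^1)\simeq S^1$, the gauge group is homotopy equivalent to a (possibly twisted) mapping space $\mathrm{Map}(B,S^1)$; its identity component deformation retracts onto the constant rotations, because in the evaluation fibration $\mathrm{Map}_*(B,S^1)\to \mathrm{Map}(B,S^1)\to S^1$ the nullhomotopic based maps lift to the contractible space $\mathrm{Map}_*(B,\R)$. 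Thus $\mathrm{Vert}_0\simeq S^1$, and unwinding the identifications shows this $S^1$ is exactly fiber rotation.

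\emph{The hard part}, and the technical core of the theorem, is the very first step: the parametrized uniqueness of the Seifert fibration, i.e.\ contractibility of $\mathcal{SF}_0$. On $\pi_0$ this is Waldhausen's classical theorem that Haken Seifert-fibered manifolds have a unique Seifert fibration up to isotopy, but promoting it to control over the full homotopy type of the space of fibrations is exactly what requires the families version of the incompressible-surface theory for Haken manifolds developed by Hatcher and Ivanov. Concretely, I would fix a hierarchy cutting $M$ along vertical incompressible surfaces (preimages of essential curves and arcs in $B$) and argue by induction on the length of the hierarchy that the space of such splittings is contractible, feeding in Hatcher's parametrized isotopy results at each stage. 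This is precisely where the hypotheses enter: irreducibility and the Haken condition supply the incompressible surfaces needed to run the induction, while excluding $T^3$ removes the single case in which the fibration is genuinely non-unique and in which, correspondingly, the base $B = T^2$ has non-contractible $\Diff^{\mathrm{orb}}_0(B)\simeq T^2$---so that both the first and second fibrations degenerate and the reduction to $S^1$ fails.
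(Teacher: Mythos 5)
First, a caveat about the comparison: the paper does not prove this statement at all---it is quoted from Hatcher and Ivanov, and the surrounding discussion only explains how the PL/smooth results transfer to the topological category. So your proposal has to be measured against their published arguments. Your architecture (reduce $\Homeo$ to $\Diff$, reduce $\Diff_0(M)$ to fiber-preserving diffeomorphisms via contractibility of the space of Seifert fibrations, then fiber over $\Diff^{\mathrm{orb}}_0(B)$ and identify the vertical group with $S^1$) is a recognized alternative organization---it is essentially how McCullough--Soma structure the non-Haken hyperbolic-base case in the paper's reference [MS]---but it is not Hatcher's or Ivanov's route, and as written it contains a genuine error in the middle step. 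It is false that excluding $T^3$ makes $\Diff^{\mathrm{orb}}_0(B)$ contractible: nilmanifolds, i.e.\ circle bundles over $T^2$ with Euler number $e\neq 0$, are orientable, Haken (aspherical, containing vertical incompressible tori), Seifert fibered, and not $T^3$, yet their base orbifold is $T^2$ with $\Diff_0(T^2)\simeq T^2$. So your second fibration does not collapse onto its fiber, and your parenthetical claim that $T^3$ is ``the single case'' with $B=T^2$ is wrong. The theorem is still true for nilmanifolds, but to recover it in your framework you must compute the connecting homomorphism $\pi_1(\Diff_0(T^2))\cong \Z^2 \to \pi_0(\mathrm{Vert})\cong H^1(T^2;\Z)$ of the gauge-group fibration: it is (up to sign and basis) multiplication by $e$, hence injective exactly when $e\neq 0$, and this is what kills the $T^2$ coming from the base and leaves only the fiber-rotation circle. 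Over $T^2$, the case $e=0$ is precisely $M=T^3$, which is how the exclusion of $T^3$ actually enters; this Euler-number argument is entirely absent from your proposal.

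There is also a structural problem with what you call the hard part. Contractibility of $\mathcal{SF}_0$ is deferred to ``Hatcher's parametrized isotopy results,'' but those are not an independent input available to feed into your induction: the parametrized uniqueness of vertical incompressible surfaces and of the Seifert fibration \emph{is} the content of the Hatcher and Ivanov theorems. Their proofs never form the space of Seifert fibrations or the fibration over $\Diff^{\mathrm{orb}}_0(B)$; they run a hierarchy induction directly on the (PL) homeomorphism group, cutting $M$ along incompressible surfaces, using contractibility of the relevant spaces of incompressible embeddings and Alexander's theorem for the ball as the base case, with the circle emerging from sliding vertical surfaces around the fibers---which is also how the nilmanifold case gets handled uniformly, with no special treatment of Euclidean bases. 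Finally, a minor attribution point: the reduction from $\Homeo(M^3)$ to $\Diff(M^3)$ as a weak equivalence needs more than Moise--Munkres--Cerf; as the paper itself notes, it uses Cerf's argument together with Hatcher's later proof of the Smale conjecture, which is why Hatcher's original 1976 proof was carried out in the PL category, where Bing--Moise gives the equivalence with the topological one.
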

  
We remark that Hatcher's original proof was in the PL category, but (as noted by Hatcher) this is equivalent to the topological category by the triangulation theorems of Bing and Moise \cite{Bing, Moise}.  Ivanov's proof of the theorem above is for groups of diffeomorphisms, but an argument due to Cerf, together with Hatcher's later proof of the Smale conjecture implies that the inclusion of $\Diff(M^3)$ into $\Homeo(M^3)$ is a homotopy equivalence; this makes the smooth category equivalent as well. 

McCullough--Soma \cite{MS} proved $\Homeo_0(M) \cong S^1$ for the small Seifert-fibered non-Haken manifolds with $\H^2 \times \R$ and $\widetilde{SL(2,\R)}$ geometries.    For spherical manifolds, Bamler-Kleiner's recent proof of the Smale conjecture \cite{BK} shows that the inclusion $\isom(M) \to \Homeo(M)$ is always a homotopy equivalence (and gives a new proof of contractibility of $\Homeo_0(M)$ when $M$ is hyperbolic).   This gives many examples of manifolds satisfying the condition of Theorem \ref{fibered thm}, including various families of lens spaces and several manifolds with non-cyclic fundamental group.   See \cite{HKMR} for a table of of homotopy types of isometry groups for spherical manifolds, as well as a good exposition on the problem and a proof (independent of Bamler--Kleiner) applicable in many specific cases.

%--------------------------------------------------------------
\section{Proof of Theorems \ref{fibered thm} and \ref{surface group thm}} \label{surface gp sec}

Let $M$ be a Seifert fibered 3-manifold, and let $G = \Homeo_0(M)$.  Let $\iota:  \SO(2) \to G$ be the action of rotating the fibers, and suppose that $\iota$ induces an inclusion $\Z \cong \pi_1(\SO(2)) \to \pi_1(G)$ as a factor in a splitting as a direct product.    
Let $\widetilde{G}$ be the covering group of $G$ corresponding to the subgroup $\pi_1(G)/\iota(\Z) \subset \pi_1(G)$.   (Recall that $G$ is locally contractible by Cernavskii \cite{Cernavskii} or Edwards--Kirby \cite{EK}, so standard covering space theory applies here.)
If $\iota$ is also surjective on $\pi_1$, for instance, a homotopy equivalence, then $\widetilde{G}$ is the universal covering group of $G$.  In general, it is a central extension $0 \to \Z \to \widetilde{G} \to G \to 1$.

We will show that this central extension represents a class $e$ in $H^2(\Homeo_0(M)^\delta; \Z) \cong H^2(B\Homeo_0(M); \Z) \cong \Z$ that is \emph{unbounded}.  This will prove Theorem \ref{fibered thm}.   
Following the framework discussed in Section \ref{prelim sec}, to show that $e$ is unbounded, it suffices to construct representations of surface groups $\rho: \pi_1(\Sigma) \to \Homeo_0(M)$ with $\langle \rho^*(e), [\Sigma] \rangle/\chi(\Sigma)$ arbitrarily large.  Although, in using this strategy, \emph{a priori}  one may need to vary the genus of surface to construct representations with increasingly large values of $\langle \rho^*(e), [\Sigma] \rangle/\chi(\Sigma)$, in this case we need only to work with a surface of genus 3.  

Put otherwise, we will show how to construct commutators $[a_i, b_i]$ with $a_i$ and $b_i \in G$ (for $i = 1, 2, 3$), such that $\prod_{i=1}^3 [a_i, b_i] = \id$, but where the product of lifts $\prod_{i=1}^3 [\widetilde{a}_i, \widetilde{b}_i]$ to $\widetilde{G}$ represent unbounded covering transformations. This will prove Theorem \ref{surface group thm}.  

\medskip

The first step is a local construction of bump functions.    

\begin{defi} \label{bump def}
A \emph{standard bump function} on $D^2$ is a function $D^2 \to \R$, which, after conjugation by some $h \in \Homeo_0(D^2)$ agrees with 
$$f(re^{i \theta}) = 
\left\{ \begin{array}{rr} 1 &\text{ if } r<1/3 \\
2-3r &\text{ if } 1/3 \leq r \leq 2/3 \\
0 &\text{ if } r > 2/3 
\end{array} \right.
$$
\end{defi}

What we have in mind as particular examples are piecewise linear (or piecewise smooth) functions $f: D^2 \cong [-1, 1] \times [-1,1] \to \R$ that are identically 0 on a neighborhood of the boundary, identically 1 on a neighborhood of $(0,0)$, and with the level sets $f^{-1}(p)$  for $p \in (0,1)$ given by piecewise linear (or piecewise smooth) curves.  Moreover, these should have the property that some line $\lambda$ from $0$ to $\partial([-1, 1] \times [-1,1])$ is transverse to each level set of $f$, with $f$ monotone along $\lambda$.    
In this case, one can easily construct the conjugacy $h$ to the function above defined on the round disc as follows.  For $p \in (0,1)$, let $\ell_p$ be the total arc length of $f^{-1}(p)$ and, for $x \in f^{-1}(p)$, let $\ell_p(x)$ denote the arc length of the segment of $f^{-1}(p)$ (oriented as the boundary of $f^{-1}([p, 1])$) from $\lambda \cap f^{-1}(p)$ to $x$.  
Then, for $x \in f^{-1}(p)$, set $h(x) = \frac{2-p}{3} e^{i \ell_p(x)/\ell(p)}$.   One may then extend $h$ arbitrarily to a homeomorphism defined on $f^{-1}(0)$ and $f^{-1}(1)$.

\begin{lem}  \label{disc lem}
Let $T = D^2 \times S^1$ be a $(p, q)$ standard fibered torus, let $f$ be a standard bump function, and let $k \in \R$.  
There exist $a, b \in \Diff(T)$ such that the commutator $b^{-1}a^{-1}ba$ preserves fibers and rotates the fiber $\{x\} \times S^1$ by $2\pi k f(x)$ if $x \neq 0$, and the exceptional fiber by $2\pi q k$.

\end{lem}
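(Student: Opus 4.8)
The plan is to produce the commutator as an explicit fiberwise rotation, using the identity that expresses a commutator of a ``horizontal'' and a ``vertical'' map as a coboundary. Write a point of $T$ as $(x,\theta)$ with $x \in D^2$ and $\theta \in \R/2\pi\Z$ the product circle coordinate. For $\sigma \in \Diff(D^2)$ and a smooth $u \colon D^2 \to \R$, put $S_\sigma(x,\theta) = (\sigma x, \theta)$ and $R_u(x,\theta) = (x, \theta + u(x))$, both elements of $\Diff(T)$. A direct computation gives $R_u^{-1} S_\sigma^{-1} R_u S_\sigma = R_{u \circ \sigma - u}$, the fiberwise rotation turning each $\{x\} \times S^1$ through the angle $(u\circ\sigma - u)(x)$. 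Taking $a = S_\sigma$ and $b = R_u$, so that $b^{-1}a^{-1}ba$ is exactly this map, the lemma reduces to solving the coboundary equation $u \circ \sigma - u = 2\pi k f$ on $D^2$, subject to $\sigma = \id$ and $u \equiv 0$ near $\partial D^2$ (so that $a$ and $b$ restrict to the identity near $\partial T$ and the construction can later be transplanted into $M$).

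The first point to notice is a constraint: at any fixed point $x$ of $\sigma$ the left-hand side vanishes, forcing $f(x) = 0$. Since $f \equiv 1$ near the center of $D^2$, the center cannot be fixed by $\sigma$; in particular a radial contraction of the disc is useless, and $\sigma$ must be fixed-point free on the whole support disc $\{f > 0\}$. This looks paradoxical for a diffeomorphism of $D^2$, but is resolved by the observation that $\sigma$ need not map $\{f>0\}$ to itself: I will choose $\sigma$ to \emph{push} the support disc off of itself. Concretely, take $\sigma$ to be the time-one map of a vector field of the form $\beta(x)\, v$, where $v$ is a fixed nonzero vector and $\beta \geq 0$ is a cutoff that equals $1$ on a disc properly containing $\supp f$ and vanishes near $\partial D^2$. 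Then on $\supp f$ this field is the constant field $v$, so $\sigma$ has no fixed points there, while all fixed points of $\sigma$ lie in $\{\beta = 0\} \subset \{f = 0\}$, and $\sigma = \id$ near $\partial D^2$. Because the field is a nonnegative multiple of $v$ everywhere, the quantity $\langle \sigma^n x, v\rangle$ is nondecreasing in $n$ and strictly increasing while the orbit meets $\supp f$, so each forward orbit leaves $\supp f$ after a uniformly bounded number of steps and never returns.

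With such a $\sigma$ the coboundary equation is solved by the telescoping series $u = -2\pi k \sum_{n \geq 0} f \circ \sigma^n$. The step I expect to be the real work is checking that $u$ is a genuine smooth function vanishing near $\partial D^2$, and this is exactly what the choice of $\sigma$ delivers: by the monotone-escape property of the previous paragraph, at each point only a uniformly bounded number of terms $f \circ \sigma^n$ are nonzero, so $u$ is in fact a finite sum of smooth functions, hence smooth; and $u$ vanishes at every point whose forward orbit avoids $\supp f$, which includes a neighborhood of $\partial D^2$. (The unavoidable slowing of $\sigma$ in the cutoff region is harmless, since it takes place in $\{f = 0\}$ where the summands already vanish.) This produces $a, b \in \Diff(T)$ whose commutator is the fiberwise rotation $R_{2\pi k f}$, turning $\{x\} \times S^1$ through $2\pi k f(x)$ for every $x$, as required for $x \neq 0$.

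It remains to read off the effect on the exceptional fiber, which is bookkeeping rather than a new construction. In the product coordinate the commutator turns the central circle $\{0\} \times S^1$ through $2\pi k f(0) = 2\pi k$. However, this same motion, expressed in the parametrization coming from the ambient Seifert fibration --- in which the exceptional fiber carries $\Z/q$ isotropy and is wound $q$ times by the nearby regular fibers --- is a rotation through $2\pi q k$. I would set up the coordinate identification between the product structure on $T$ and the Seifert fibering carefully at this last step, since it is precisely the factor $q$ that the global assembly of these local commutators over the exceptional fibers of $M$ will need in order to turn products of commutators that die in $\Homeo_0(M)$ into large covering translations of $\tilde G$.
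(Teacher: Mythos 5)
Your commutator identity $R_u^{-1}S_\sigma^{-1}R_uS_\sigma=R_{u\circ\sigma-u}$ is exactly the mechanism the paper uses, but your solution of the coboundary equation has a genuine gap: the function $u=-2\pi k\sum_{n\geq0}f\circ\sigma^n$ is not continuous, so $R_u$ is not even a homeomorphism. The fallacy is in the step ``at each point only a uniformly bounded number of terms are nonzero, so $u$ is a finite sum of smooth functions, hence smooth.'' Smoothness (even continuity) requires the sum to be \emph{locally} finite --- one finite index set must work on a whole neighborhood --- and this fails precisely at the fixed points of $\sigma$ lying upstream of $\supp f$. Concretely, normalize $|v|=1$, $v=(1,0)$, take $f$ to be the model radial bump of Definition \ref{bump def} (as one may, after conjugating), and let $z^*=(-s^*,0)$ be the zero of $\beta$ on the negative $x$-axis closest to the origin. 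Since $\beta\equiv1$ on a disc properly containing $\supp f$, we have $s^*>2/3$, so $f(z^*)=0$, the orbit of $z^*$ is constant, and your formula gives $u(z^*)=0$. But for small $\epsilon>0$ the point $y_\epsilon=(-s^*+\epsilon,0)$ has $\beta>0$ along the segment to the first zero of $\beta$ on the positive $x$-axis, so its forward orbit marches monotonically across the bump with steps of length at most $1$; since $\{f\geq1/2\}$ meets the $x$-axis in the interval $[-1/2,1/2]$ of length $1$, at least one orbit point lands in it. Hence $\sum_{n\geq0}f(\sigma^ny_\epsilon)\geq1/2$ and $u(y_\epsilon)\leq-\pi k$ for every small $\epsilon>0$, while $u(z^*)=0$: the function $u$ jumps at $z^*$. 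Your parenthetical remark that the slowing in the cutoff region is harmless because it happens where $f=0$ is exactly where the error hides: the slowing creates fixed points at the upstream end of orbits that \emph{later} cross $\supp f$, and that destroys local finiteness.

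The defect is intrinsic to the one-sided sum, not to your choice of $\beta$: telescoping along any orbit crossing the bump shows that a continuous solution must satisfy $u(z_+)-u(z_-)=2\pi k\sum_{n\in\Z}f(\sigma^nx)\neq0$, where $z_\pm$ are the forward and backward limit points of the orbit (both fixed by $\sigma$); so no continuous solution can vanish on the whole fixed-point set, which your formula forces. A repair would have to prescribe these nonzero values of $u$ along the upstream fixed set and establish regularity there --- genuine work your write-up does not contain. The paper avoids the inverse problem altogether by running the same identity \emph{forward}: it takes $a$ to be the fiberwise rotation by an explicit PL bump $\phi$ and $b$ an explicit PL push of the base, and accepts whatever coboundary $\phi-\phi\circ b$ results, checking that this difference is itself a standard bump function; since Definition \ref{bump def} only specifies $f$ up to conjugation by a homeomorphism, producing \emph{some} standard bump suffices, and conjugating the whole construction then realizes any prescribed $f$. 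That direction involves no infinite sums and no continuity issues. Note also that the exceptional-fiber clause with its factor of $q$, which you defer as ``bookkeeping,'' is handled in the paper by building the compatibility of the rotation amounts on regular and exceptional fibers into the definition of $a$ from the start, rather than by a coordinate change at the end.
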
 

\begin{proof}
We take local coordinates to identify $D^2$ with the rectangle $[-3, 3] \times [-3, 3] \subset \R^2$, so that the exceptional fiber passes through $(0,0)$, and we work in the PL setting.  First, define $\phi$ to be a standard bump function that is identically $1$ on $[-1, 1]^2$, zero on the complement of $[-2, 2]^2$, and in the topological annulus between these regions of definition, it is linear on each of the four sets cut out by the diagonals of $[-3, 3]^2$.   Level sets of $a$ are shown in Figure \ref{bump fig} left.   
For a point $(x, s)$ in $[-3, 3]^2 \times S^1$, define $a(x,s) =(x, s + 2 \pi q k \phi(x))$ if $x \neq (0,0)$ (i.e. a rotation of the fiber over $x$ by $2 \pi q k \phi(x)$), and define $a$ to be a rotation by $2 \pi k$ on the exceptional fiber.  

To construct $b$, first define $F: [-3, 3] \to [-3, 3]$ by 
$$F(u)  = 
\left\{ \begin{array}{rr} u &\text{ if } u \geq1 \\
\tfrac{u + 2}{3} &\text{ if } -2 < u < 1 \\
3(u+2) &\text{ if } -3 \leq u \leq -2 
\end{array} \right.
$$
and define $b$ on $[-3, 3] \times [-3,3] \times S^1$ by $b(u, v, e^{i \theta}) = (F(u), v, e^{i \theta})$.

Since both $a$ and $b$ preserve fibers, $ba^{-1}b^{-1}$ does as well.  Moreover, $ba^{-1}b^{-1}$ rotates the fiber through a point $x \in [-3,3]^2$ by $- 2\pi q k \phi(b^{-1}(x))$ for $x \neq 0$, and by $- 2\pi k \phi(b^{-1}(x))$ on the exceptional fiber.  Composing $a \circ ba^{-1}b^{-1}$ gives a function which rotates a non-exceptional fiber over a point $x$ by $2\pi q k(\phi(x) - \phi(b^{-1}(x)))$, this gives a standard bump function whose level sets are depicted in Figure \ref{bump fig} right; it is the result of adding the bump functions of the other figures.   
\end{proof} 

  \begin{figure*}
   \labellist 
  \small\hair 2pt
   \pinlabel $a$ at 150 10
   \pinlabel $ba^{-1}b^{-1}$ at 350 12
   \pinlabel $aba^{-1}b^{-1}$ at 550 12
   \pinlabel 0 at 50 130 
   \pinlabel $k$ at 140 130 
   \pinlabel 0 at 310 130 
   \pinlabel -$k$ at 380 130 
   \pinlabel 0 at 495 130 
 \pinlabel $k$ at 555 130 
   \endlabellist
     \centerline{ \mbox{
 \includegraphics[width = 5.5in]{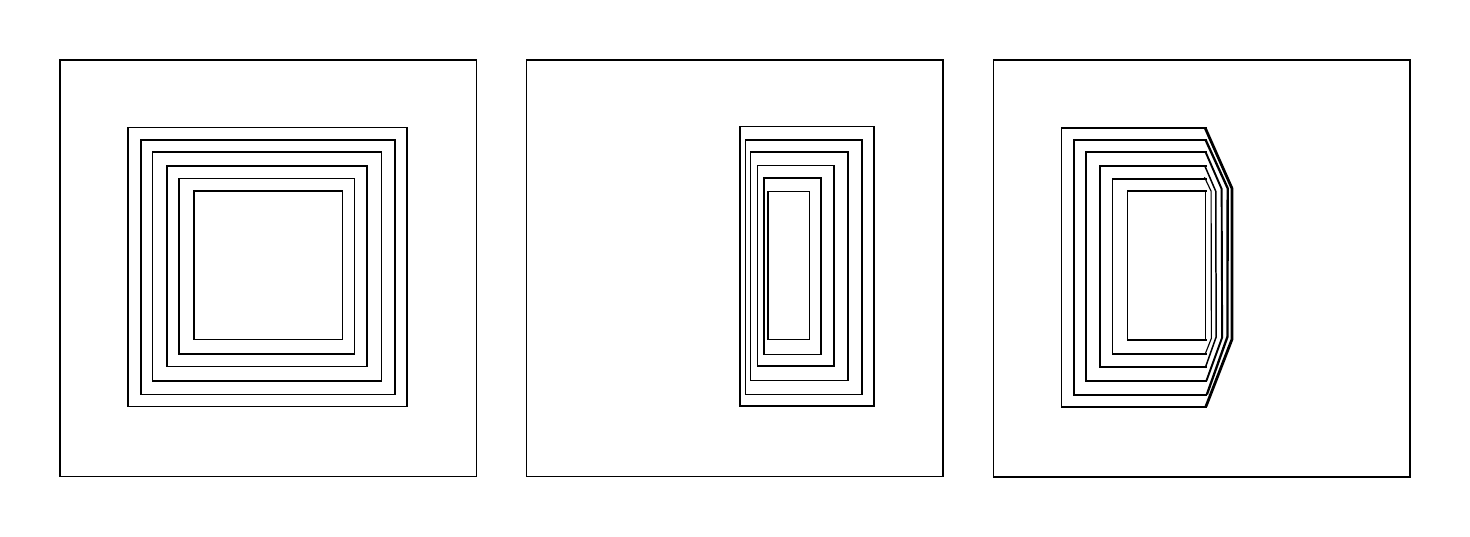}}}
 \caption{Level sets of PL bump functions}
  \label{bump fig}
  \end{figure*}

The next step is to glue the bump functions given by Lemma \ref{disc lem} together into a nice partition of unity, subordinate to an open cover consisting of only three sets.  

\begin{lem} \label{good partition} 
Let $S$ be an orientable topological surface.  There exists an open cover $\mathcal O = \{O_1, O_2, O_3\}$ of $S$, with each $O_i$ a union of disjoint homeomorphic open balls, and a partition of unity $\lambda_i$ subordinate to $\mathcal O$ such that the restriction of $\lambda_i$ to any connected component of $O_i$ is a standard bump function. 
\end{lem}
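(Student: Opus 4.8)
The plan is to extract both the cover $\mathcal O$ and the partition of unity from a triangulation of $S$, using its barycentric subdivision to obtain a canonical three-coloring of the cells. First I would fix a triangulation $T$ of $S$ (topological surfaces are triangulable, and one may take $T$ locally finite if $S$ is noncompact) and pass to the barycentric subdivision $T'$. The vertices of $T'$ are precisely the barycenters of the simplices of $T$, and they fall into three classes according to the dimension $d \in \{0,1,2\}$ of the simplex they subdivide. For $i = 1,2,3$ I set $O_i$ to be the union of the open stars, taken in $T'$, of the barycenters of the $(i-1)$-simplices of $T$. Since $S$ is a surface, the link of each vertex of $T'$ is a circle, so each open star is an open disk and each $O_i$ is a union of open disks. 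These disks are pairwise disjoint within a fixed $O_i$ because an edge of $T'$ always joins the barycenter of a simplex to that of a proper face or coface, hence joins vertices of \emph{distinct} classes; and the $O_i$ cover $S$ because the open stars of all vertices of any simplicial complex cover it. This settles every requirement on $\mathcal O$.

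For the partition of unity I would begin with the barycentric coordinate functions $\mu_w$ of $T'$: these are piecewise linear, satisfy $\mu_w(w) = 1$, vanish off the open star of $w$, and obey $\sum_w \mu_w \equiv 1$. Grouping by class, put $\nu_i = \sum \mu_w$ over the type-$i$ barycenters $w$; then $\nu_1 + \nu_2 + \nu_3 \equiv 1$, each $\nu_i$ is supported in $O_i$, and on a single component of $O_i$ (the star of one $w$) one simply has $\nu_i = \mu_w$. The $\nu_i$ are not yet standard bumps, since $\{\nu_i = 1\}$ is a single point rather than a disk, so I would flatten them. Fixing $\epsilon \in (0,\tfrac13)$ and a continuous nondecreasing $g \colon [0,1] \to [0,1]$ with $g \equiv 0$ on $[0,\epsilon]$, $g \equiv 1$ on $[1-\epsilon,1]$, and $g$ strictly increasing on $[\epsilon, 1-\epsilon]$, I set
$$\lambda_i = \frac{g(\nu_i)}{g(\nu_1) + g(\nu_2) + g(\nu_3)}.$$
The denominator never vanishes, because at every point of $S$ some $\nu_j \ge \tfrac13 > \epsilon$, so the $\lambda_i$ are continuous, sum to $1$, and satisfy $\supp \lambda_i \subset \{\nu_i \ge \epsilon\} \subset O_i$.

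It then remains to check that the restriction of $\lambda_i$ to a component $B$ of $O_i$ is a standard bump in the sense of Definition \ref{bump def}. On $B$ we have $\nu_i = \mu_w$, and where $\mu_w \ge 1-\epsilon$ the other two grouped coordinates are each $\le \epsilon$, forcing $\lambda_i \equiv 1$ on the disk $\{\mu_w \ge 1-\epsilon\}$; similarly $\lambda_i \equiv 0$ on the annular region $\{\mu_w \le \epsilon\}$ abutting $\partial B$. So $\lambda_i|_B$ already has the correct flat top and flat boundary behavior.

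The step I expect to be the genuine obstacle is verifying that $\lambda_i|_B$ is \emph{unimodal}: that for each $p \in (0,1)$ the superlevel set $\{\lambda_i|_B \ge p\}$ is a topological disk, equivalently that each intermediate level set is a single simple closed curve, and that some arc from $w$ to $\partial B$ crosses every level set exactly once. Granting this, the explicit procedure described after Definition \ref{bump def} manufactures the conjugating homeomorphism $h$ and identifies $\lambda_i|_B$ with the model $f$. To establish unimodality I would use that $\lambda_i$ is an increasing function of $\nu_i = \mu_w$ and a decreasing function of $\nu_j + \nu_k = 1 - \mu_w$, combined with the fact that $\mu_w$ is a piecewise-linear cone on $B$ with nested polygonal superlevel disks; along a ray from $w$ transverse to the level sets of $\mu_w$, the value $\lambda_i$ is then monotone, and one rules out extraneous components of level sets in the transition annulus where $\epsilon < \mu_w < 1-\epsilon$. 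This is where the argument has to be carried out with some care, as $\lambda_i$ also depends on the angular distribution of $\nu_j$ and $\nu_k$ around the star; the monotonicity of $g$ and the cone structure of $\mu_w$ are what keep the level sets from disconnecting.
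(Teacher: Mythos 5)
Your cover is correct, and it is combinatorially the same as the paper's: the paper takes a trivalent graph $\Gamma$ dual to a triangulation, and the components of its $O_1, O_2, O_3$ are indexed by faces, edges, and vertices of $\Gamma$, i.e.\ by vertices, edges, and triangles of the triangulation --- exactly the indexing of your barycentric stars. The genuine divergence is in the partition of unity. The paper builds $\lambda_1, \lambda_2, \lambda_3$ by hand as piecewise linear functions whose level sets are, by construction, polygonal simple closed curves admitting a transverse arc along which the function is monotone; the standard-bump property is then immediate from the explicit recipe following Definition \ref{bump def}, so nothing is left to verify. Your normalization $\lambda_i = g(\nu_i)/\bigl(g(\nu_1)+g(\nu_2)+g(\nu_3)\bigr)$ is cleaner to write down, but it transfers all of the difficulty into precisely the property the paper's construction has built in --- and that verification, which you yourself single out as the crux, is left as a sketch. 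Moreover the justification you sketch is mis-stated: $\lambda_i$ is \emph{not} a monotone function of $\nu_i$ and $\nu_j + \nu_k$ alone (as you subsequently concede, it depends on $\nu_j$ and $\nu_k$ separately because $g$ is nonlinear), so that cannot be the argument. As written, this is a real gap at the decisive step.

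The gap is fillable, and the observation that closes it is the following. On the set $\{0 < \lambda_i|_B < 1\}$ one automatically has $\epsilon < \mu_w < 1-\epsilon$: indeed $\lambda_i > 0$ forces $g(\mu_w) > 0$, i.e.\ $\mu_w > \epsilon$, while $\lambda_i < 1$ forces $\max(\nu_j,\nu_k) > \epsilon$, whence $\mu_w \le 1 - \max(\nu_j,\nu_k) < 1-\epsilon$. Now parametrize $B$ by the linear segments (``rays'') from $w$ to points of the link circle; each such segment lies in a single closed triangle of $T'$, so along it all barycentric coordinates are affine, with $\mu_w(t) = 1-t$ strictly decreasing and $\nu_j, \nu_k$ nondecreasing. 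Writing $\lambda_i = \bigl(1 + (g(\nu_j)+g(\nu_k))/g(\mu_w)\bigr)^{-1}$, on the region $\{0<\lambda_i<1\}$ the quantity $g(\nu_j)+g(\nu_k)$ is nondecreasing along each ray while $g(\mu_w)$ is positive and \emph{strictly} decreasing (as $\mu_w$ stays in $(\epsilon,1-\epsilon)$, where $g$ is strictly increasing); hence $\lambda_i$ is strictly decreasing there. Combined with continuity, the profile of $\lambda_i$ along every ray is: identically $1$ on an initial segment, then strictly decreasing from $1$ to $0$, then identically $0$. Consequently each level set $\{\lambda_i|_B = p\}$, $p \in (0,1)$, meets every ray exactly once; the crossing point varies continuously with the ray (by uniqueness of the crossing and compactness of the link), so each level set is a simple closed curve and these curves are radially nested. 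The conjugating homeomorphism is then explicit: send the level-$p$ point on the ray through the link point of angle $\theta$ to $\frac{2-p}{3}e^{i\theta}$, and extend radially over the star-shaped disk $\{\lambda_i = 1\}$ and the collar $\{\lambda_i = 0\}$. Note that strictness of the monotonicity is essential --- merely nonincreasing behavior along rays would allow level sets containing radial arcs, which are not simple closed curves. With this argument supplied, your proof is complete; without it, the standard-bump claim is an assertion, which is exactly what the paper's more pedestrian piecewise linear construction was designed to avoid.
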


\begin{proof}  
Let $\Gamma = (V, E)$ be a degree three graph on $S$, with polygonal faces.   For example, $\Gamma$ may be constructed as the dual graph to a triangulation of $S$.  
First we define the sets in the cover $\mathcal{O} = \{ O_1, O_2, O_3 \}$.   Let $N_\delta$ denote the union of the $\delta$-neighborhoods of the edges in $\Gamma$.  Fixing an appropriate metric and PL structure on $S$, we may assume that the boundary of $N_\delta$, for any sufficiently small $\delta>0$, consists of line segments parallel to the edges of $\Gamma$.  

Fixing $\delta$,  let $O_1 = S \setminus N_{\delta/2}$.  Choose $\delta$ small enough so that connected components of $O_1$ are in one to one correspondence with faces of the graph, each the complement of a small $\delta/2$-neighborhood of the boundary of the face.   For each edge $e$, let $m_e$ denote its midpoint.  In a neighborhood of $m_e$, $N_\delta$ has natural local coordinates as $(-\delta, \delta) \times (-1, 1)$  with the edge given by $0 \times (-1,1)$, $m_e = (0, 0)$ and lines $\{p\} \times (-1,1)$ parallel to the edge.  We assume that $\delta$ is small enough so that we may choose these neighborhoods of midpoints to be pairwise disjoint and let $U_e$ denote the neighborhood containing $m_e$. Let $O_2$ be the union $\bigcup_{e \text{ edge}} U_e$.   Finally, let $X$ be the union of the sub-neighborhoods $(-\delta, \delta) \times [-1/2, 1/2]$ and let $O_3$ be the complement of $X$ in $N_\delta$.    See figure \ref{cover fig} for a local picture. 

  \begin{figure*}
   \labellist 
  \small\hair 2pt
   \pinlabel $O_1$ at 150 135
   \pinlabel $O_2$ at 193 62
   \pinlabel $O_3$ at 65 50
   \pinlabel $\Gamma$ at 350 65 
   \pinlabel $\bullet$  at 205 78
   \pinlabel $m_e$ at 217 70
   \endlabellist
     \centerline{ \mbox{
 \includegraphics[width = 3.5in]{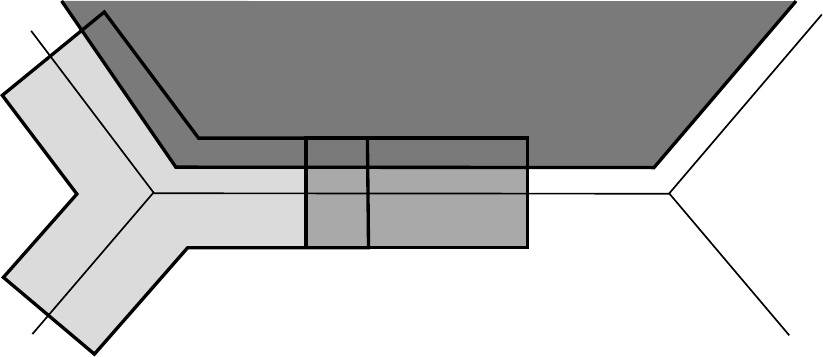}}}
 \caption{A cover supporting a good partition}
  \label{cover fig}
  \end{figure*}

We now construct the desired partition of unity, with $\lambda_i$ supported on $O_i$.   
Define $\lambda_1$ to be constant $1$ on $S \setminus N_{\delta}$, constant $0$ on $N_{\delta/2}$, and piecewise linear in the intermediate regions, with level sets consisting of polygons with edges parallel to the edges of $\Gamma$.  

Let $g = 1 - \lambda_1$, this is a function supported on $O_2 \cup O_3$.   Define $\lambda_2$ to agree with $g$ on the complement of $\bigcup_{e \in E} U_e$.   In the coordinates $U_e = (-\delta, \delta) \times (-1, 1)$ given above, define the restriction of $\lambda_2$ to $U_e$ to agree with $g$ on $(-\delta, \delta) \times (-1/2, 1/2)$, to be given by 
$\lambda_2(x, y) = 2(1-|y|)g(x,y)$ on $(-\delta, \delta) \times (-1, -1/2) \cup (-\delta, \delta) \times (1/2, 1)$, and then extend $\lambda_2$ to be $0$ elsewhere.  This gives a continuous (in fact, piecewise linear) bump function supported on $O_2$.  Finally, let $\lambda_3 = 1 - \lambda_1 - \lambda_2$, which is supported on $O_3$.  It is easily verified that this is a standard bump function, as in the example discussed after Definition \ref{bump def}.  
\end{proof}

To finish the proof of Theorem \ref{surface group thm}, let $M$ be a Seifert fibered $3$-manifold, and let $S$ be the base orbifold.  
Take a cover $\mathcal{O} = \{ O_1, O_2, O_3 \}$ of $S$ as given by Lemma \ref{good partition}.   Using the construction from Lemma \ref{good partition} starting with a graph on $S$, we may arrange for each exceptional fiber to be contained in only one set in $\mathcal O$, and also to have each connected component of each element of $\mathcal O$ contain at most one exceptional fiber.   Let $\{ \lambda_i\}$ be the partition of unity subordinate to this cover consisting of standard bump functions.   

Fix a connected component $B$ of some set $O_i \in \mathcal{O}$, and let $B \times S^1$ be the union of fibers over $B$.  By construction this is a $(p,q)$ standard fibered torus for some $p,q$.   
Fix $K \in \Z$.  Lemma \ref{disc lem} constructs homeomorphisms $a_B, b_B \in \Homeo_0(M^3)$ supported on $B \times S^1$ such that the commutator $[a_B, b_B]$ rotates each (nonexcptional) fiber over $\{x\} \times S^1$ by $2\pi K \lambda_i(x)$.   There is a natural path $a_B(t)$ from the identity in $\Homeo_0(M)$ to $a_B(1) = a_B$ by applying the construction of Lemma \ref{disc lem} to give rotations of a (non-exceptional) fiber through $x$ by $2 \pi q tK \lambda_i(x)$ at time $t$.   

Then $[a_B(t), b_B]$ gives a path from identity to $[a_B, b_B]$ that rotates (non-exceptional) fibers by $2 \pi q tK \lambda_i(x)$ at time $t$.  
Moreover, if $b_B(t)$ is any path from $b_B$ to the identity supported on $B$, then $[a_B(t), b_B]$ is homotopic rel endpoints to $[a_B(t), b_B(t)]$.  
 Let 
 $$a_i =  \prod_{B} a_B, \, \text{ and } \, a_i(t) =  \prod_{B} a_B(t)$$ 
 where the product is taken over all connected components of $O_i$.  Similarly, let 
 $$b_i = \prod_B b_B, \, \text{ and } \, b_i(t) =  \prod_{B} b_B(t).$$

Let $\widetilde{G}$ be the covering group of $G = \Homeo_0(M)$ as given at the beginning of this section; i.e. the central extension $0 \to \Z  \to  \widetilde{G} \to G \to 1$.   One  definition of this covering group is as the set of equivalence classes of paths based at the identity in $G$, where two paths are equivalent if they have the same endpoint and their union is an element of $\pi_1(G)$ that belongs to the subgroup $\pi_1(G)/\iota(\Z)$.   The group operation is pointwise multiplication, or equivalently, concatenation.  In this interpretation, the inclusion of $n \in \Z$ into $\widetilde{G}$ is given by a path $g_t$ in $G$, $t\in [0,1]$  that rotates (nonexceptional) fibers by an angle of $2\pi n t$ at time $t$.

Now we return to the machinery of Section \ref{prelim sec}.  Consider the map of a genus 3 surface group into $G$ where the images of the standard generators are $a_i$ and $b_i$ as defined above.    The paths $a_i(t)$ and $b_i(t)$ give lifts of $a_i$ and $b_i$ to $\widetilde{G}$, with commutator 
$[a_i(t), b_i(t)]$ a path from the identity to a map that rotates fibers by $2\pi K \lambda_i(x)$.   Hence, $\prod_{i=1}^3 [a_i(t), b_i(t)]$ represents $K \in \Z$.   Thus, if $\rho$ is the associated map of the surface group, and $e$ the Euler class in $H^2(G, \Z)$, this means that 
$\langle \rho_*(\Sigma), e \rangle = K$.  Since $K$ can be chosen arbitrarily, this proves Theorem \ref{surface group thm}.  
\qed

\begin{rem} 
The constructions above can likely be realized in the smooth category (i.e. with a homomorphism $\pi_1(\Sigma_3) \to \Diff_0(M)$), however, some more care is needed in the construction of the bump functions, as not all convex, smooth bump functions on a disc are smoothly conjugate.  
\end{rem}

%-------------------------------------------------------

\end{document}